\numberwithin{equation}{section}
\theoremstyle{plain}
\newtheorem{thm}{Theorem}[section]
\newtheorem{theorem}[thm]{Theorem}
\newtheorem{question}[thm]{Problem}
\newtheorem{prop}[thm]{Proposition}
\theoremstyle{definition}
\newtheorem{definition}[thm]{Definition}
\newtheorem{remark[thm]}{Remark}
\def\Ord{\protect\operatorname{Ord}}
\def\cat{\protect\operatorname{cat}}
\def\cd{\protect\operatorname{cd}}
\def\scr{\mathcal}
\def\1{\hbox{\rm\rlap {1}\hskip.03in{\rom I}}}
\def\Bbbone{{\rm1\mathchoice{\kern-0.25em}{\kern-0.25em}
{\kern-0.2em}{\kern-0.2em}I}}
\long\def\forget#1\forgotten{} %
\begin{document}
\title[On topological complexity of twisted products ]
{On topological complexity of twisted products}

\author[A.~Dranishnikov]
{Alexander Dranishnikov}
\address{A. Dranishnikov, Department of Mathematics, University
of Florida, 358 Little Hall, Gainesville, FL 32611-8105, USA}
\email{dranish@math.ufl.edu}
\thanks{Supported by NSF, grant DMS-1304627}
\subjclass[2000]
{Primary 55M30; 
Secondary 57N65, 54F45}  

\begin{abstract} We provide an upper bound on the topological complexity of twisted products.
We use it to give an estimate $$TC(X)\le TC(\pi_1(X))+\dim X$$
of the topological complexity of a space in terms of its dimension and the complexity of its fundamental group. 
\end{abstract}

\maketitle

\section{Introduction}

 The {\em topological complexity}  $TC(X)$ of a space $X$ was defined by Farber~\cite{F1} as the minimum integer $k$ 
such that $X\times X$ admits an open cover $U_0,\dots, U_k$ by $k+1$ sets such that for each $U_i$ there is a continuous  motion planning algorithm $s_i$, i.e., a continuous map $s_i:U_i\to PX$ to the path space  $PX=X^{[0,1]}$ such that
$s_i(x,y)(0)=x$ and $s_i(x,y)(1)=y$ for all $(x,y)\in U_i$. We note that here we defined the reduced topological complexity. In the original definition 
the enumeration of $U_i$ goes from 1 to $k$. Thus, it defines the nonreduced topological complexity which  is by one larger.

The topological complexity is homotopy invariant. Therefore one can define the topological complexity $TC(\pi)$
of a discrete group $\pi$ as $TC(K(\pi,1))$ where $K(G,1)$ is an Eilenberg-Maclane complex. It is known that $$cd(\pi)\le TC(\pi)\le 2cd(\pi)$$ 
where $cd(\pi)$ is the cohomological dimension of a group $\pi$~\cite{Br}.
M. Farber proposed a natural question:
\begin{question}[Farber]
What kind of a discrete group invariant is $TC(\pi)$?
\end{question}
Note that for groups with infinite cohomological dimension $TC(\pi)=\infty$. Thus, Farber's question makes sense
only for groups admitting a finite dimensional Eilenberg-MacLane complex, in particular, for torsion free groups.
It is known that the reduced Lusternik-Schnirelman category of a group $\pi$ agrees with the cohomological dimension, $$\cat(K(\pi,1))=cd(\pi).$$
In view of the equality $TC(G)=\cat G$~\cite{F3} for  
all topological groups $G$ and the fact that $K(\pi,1)$ is homotopy equivalent to a topological group for abelian 
$\pi$, we obtain that $TC(\pi)=cd(\pi)$ for abelian groups.
For free nonabelian groups the other bound is taken, $TC(\pi)=2cd(\pi)$.
Using the wedge formula for the topological complexity~\cite{Dr3} Yu. Rudyak noticed that
for all values $k$ and $n$ with $n\le k\le 2n$ there is a group $\pi$ with $cd(\pi)=n$ and $TC(\pi)=k$~\cite{Ru}.

In this paper we investigate how the topological complexity of the fundamental group could help
to estimate the topological complexity of a space.
A. Costa and M. Farber~\cite{CF} obtained the following upper bound
$$
TC(X)\le 2\cd(\pi)+\dim X
$$
for
the topological complexity of a space $X$ with the fundamental group $\pi$.
Their result is parallel to the estimate for the Lusternik-Schnirelmann category~\cite{Dr1}(see also~\cite{Dr2}):
$$\cat X\le cd(\pi)+\frac{1}{2}\dim X.$$

In this paper we improve the Costa-Farber inequality to the following
$$
TC(X)\le TC(\pi)+\dim X.
$$
This result was obtained as a corollary of an  upper bound formula for the topological complexity
of a twisted product.
It is known that in the case of the Cartesian product $X=B\times F$ there is a formula
$$
TC(X)\le TC(B)+TC(F).
$$
In the case of a twisted product $X=B\tilde{\times}F$ over $B$ with the fiber $F$ and the structure group $G$ we prove the formula
$$
TC(X)\le TC(B)+TC^*_G(F)
$$
where $TC^*_G(F)$ is a version of the equivariant topological complexity introduced in this paper.
We note that our version of the equivariant topological complexity differs from those defined by Colman-Grant~\cite{CG} and Lubawski-Marzantowicz~\cite{LM}.
We call it the {\em strongly equivariant topological complexity}.

\section{Preliminaries}
Inspired by the work of Kolmogorov~\cite{K} on Hilbert's 13th problem, Ostrand~\cite{Os}
gave a characterization of  dimension in terms of $k$-covers. In this paper we apply his technique
to the topological complexity.

A family $\scr U$ of subsets of $X$ is called a {\em $k$-cover},
$k\in N$ if every its subfamily of $k$ elements forms a cover of $X$. 
The order $\Ord_x\scr U$ of a cover $\scr U$ at a point $x\in X$ is the number of elements in $\scr U$
that contain $x$.
The
following is obvious.
\begin{prop}\label{n-cover}
A family $\scr U$ that consists of $m$ subsets of $X$ is an
$(n+1)$-cover of $X$ if and only if $\Ord_x\scr U\ge m-n$ for all
$x\in X$.
\end{prop}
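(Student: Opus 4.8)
The plan is to prove the equivalence by a direct counting argument carried out at each point, passing through the complementary family of sets that \emph{miss} a given point. Fix $x\in X$ and write $\scr U=\{U_1,\dots,U_m\}$. The number of indices $i$ with $x\notin U_i$ is exactly $m-\Ord_x\scr U$. A subfamily $\scr V\subseteq\scr U$ consisting of $n+1$ sets fails to cover the point $x$ precisely when every member of $\scr V$ omits $x$; hence such a subfamily exists if and only if at least $n+1$ of the $U_i$ omit $x$, i.e. if and only if $m-\Ord_x\scr U\ge n+1$.

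Next I would unwind the definition of an $(n+1)$-cover. By definition $\scr U$ is an $(n+1)$-cover iff every $(n+1)$-element subfamily covers $X$, equivalently iff for no point $x$ does some $(n+1)$-element subfamily fail to cover $x$. By the previous paragraph this is equivalent to requiring $m-\Ord_x\scr U\le n$ for all $x\in X$, that is, $\Ord_x\scr U\ge m-n$ for all $x$, which is exactly the asserted condition. When $m\le n$ the inequality $\Ord_x\scr U\ge m-n$ holds automatically, matching the fact that a family of fewer than $n+1$ sets is vacuously an $(n+1)$-cover, so the two sides remain in agreement in that degenerate range.

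I do not anticipate any genuine obstacle here; the statement really is elementary, and the only point requiring a moment's care is the degenerate case $m\le n$, which is dispatched by the observation just made. The argument is purely combinatorial and uses nothing beyond the definitions of $k$-cover and of $\Ord_x$.
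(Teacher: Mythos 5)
Your proof is correct and is essentially the same counting argument as the paper's: both pass through the observation that $m-\Ord_x\scr U$ is the number of sets in $\scr U$ missing $x$, and that an $(n+1)$-element subfamily fails at $x$ precisely when all its members miss $x$. The paper phrases it as two contrapositive implications while you run a single chain of equivalences and explicitly note the degenerate case $m\le n$, but the substance is identical.
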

\begin{proof}
If $\Ord_x\scr U< m-n$ for some $x\in X$, then $n+1=m-(m-n)+1$
elements of $\scr U$ do not cover $x$.

If $n+1$ elements of $\scr U$ do not cover some $x$, then
$\Ord_x\scr U\le m-(n+1)<m-n$.
\end{proof}

\begin{definition} Given $\Delta\subset Z$,
a subset $U\subset Z$ is called {\em deformable to $\Delta$} if there is a homotopy
$h_t:U\to Z$ with $h_0: U\to Z$ the inclusion and $h_1(U)\subset\Delta$.
An open cover $\scr U=\{U_0, U_1,\dots, U_n\}$ of $Z$ is called {\em
$\Delta$-deformable} if  each $U_i$ is deformable to  $\Delta$.
If a group $G$ acts on $Z$ and $\Delta$ is an invariant set, we call a subset $U$ equivariantly deformable to $\Delta$
if the above homotopy $h_t$ is an equivariant map for each $t\in[0,1]$.
\end{definition}
The following is well-known:
\begin{prop}
The topological complexity $TC(X)$ of a space $X$ is the minimum number $k$ 
such that $X\times X$ admits an open cover $U_0,\dots, U_k$ by $\Delta(X)$-deformable sets where
$\Delta(X)=\{(x,x)\in X\times X\mid x\in X\}$ is the diagonal in $X\times X$.
\end{prop}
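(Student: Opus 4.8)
The plan is to reduce the statement to a ``per-member-of-the-cover'' equivalence between the two notions, so that the two descriptions of $TC(X)$ literally amount to imposing the same condition on each element of an open cover of $X\times X$. Concretely, I claim: for a subset $U\subseteq X\times X$ there is a continuous map $s\colon U\to PX$ with $s(x,y)(0)=x$ and $s(x,y)(1)=y$ for all $(x,y)\in U$ if and only if $U$ is deformable to the diagonal $\Delta(X)$ in $X\times X$. Granting this, the Proposition is immediate: $TC(X)\le k$ iff $X\times X$ admits an open cover $U_0,\dots,U_k$ each member of which carries a continuous motion planning algorithm in the sense of \cite{F1}, iff each member is $\Delta(X)$-deformable, iff $X\times X$ admits a $\Delta(X)$-deformable open cover of cardinality $k+1$.

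For the ``only if'' part of the claim, suppose $s\colon U\to PX$ is a continuous motion planning algorithm over $U$. Define $h\colon U\times[0,1]\to X\times X$ by $h_t(x,y)=\bigl(x,\,s(x,y)(1-t)\bigr)$; this is continuous because $s$ is continuous and evaluation $PX\times[0,1]\to X$ is continuous. Then $h_0(x,y)=(x,s(x,y)(1))=(x,y)$, so $h_0$ is the inclusion $U\hookrightarrow X\times X$, while $h_1(x,y)=(x,s(x,y)(0))=(x,x)\in\Delta(X)$, so $h_1(U)\subseteq\Delta(X)$. Hence $U$ is deformable to $\Delta(X)$.

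For the ``if'' part, suppose $h_t\colon U\to X\times X$ is a homotopy with $h_0$ the inclusion and $h_1(U)\subseteq\Delta(X)$; write $h_t(x,y)=(a_t(x,y),b_t(x,y))$, so that $a_0(x,y)=x$, $b_0(x,y)=y$, and $a_1(x,y)=b_1(x,y)$. For each $(x,y)\in U$ the concatenation of the path $t\mapsto a_t(x,y)$, which runs from $x$ to $a_1(x,y)$, with the path $t\mapsto b_{1-t}(x,y)$, which runs from $b_1(x,y)=a_1(x,y)$ to $y$, is a path from $x$ to $y$. Letting $s(x,y)$ be this concatenation, suitably reparametrized on $[0,1]$, defines a map $s\colon U\to PX$ with the required endpoint conditions, and it is continuous since forming the two parametrized paths is continuous (by the exponential law, $a$ and $b$ being continuous on $U\times[0,1]$) and path concatenation is a continuous operation on $PX$. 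This proves the claim, and with it the Proposition.

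The argument is essentially formal; the only points that require a little care are the continuity of the evaluation and concatenation operations on the path space $PX$ (which hold for the class of spaces under consideration, e.g. CW complexes, or in full generality in the compactly generated setting) and keeping track of the endpoint and reparametrization bookkeeping so that the homotopies and the concatenations match up. I do not expect any substantial obstacle: this is the standard translation, going back to Farber, between motion planning sections of the path fibration $PX\to X\times X$ and deformations of subsets of $X\times X$ into the diagonal, and it is the form of the definition we use throughout the paper.
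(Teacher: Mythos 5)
Your proof is correct and is the standard translation between motion planning sections of the free path fibration and deformations to the diagonal; the paper itself states this proposition as well-known and supplies no proof, so there is nothing in the text to compare against. One small remark: in the ``if'' direction, after concatenating $t\mapsto a_t(x,y)$ with $t\mapsto b_{1-t}(x,y)$ you do not even need to invoke continuity of the concatenation operation on $PX$ as a separate fact --- you can simply write down the formula
\[
s(x,y)(t)=\begin{cases} a_{2t}(x,y), & 0\le t\le \tfrac12,\\ b_{2-2t}(x,y), & \tfrac12\le t\le 1,\end{cases}
\]
observe it is well defined and jointly continuous on $U\times[0,1]$ by the pasting lemma, and then pass to $s\colon U\to PX$ via the exponential law (which does require $X$ to be in a convenient category such as compactly generated spaces or metric ANRs, as you note). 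This avoids any appeal to a separately proved continuity of $*\colon PX\times_X PX\to PX$.
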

We call an action of a topological group $G$ on a locally compact metric space $X$ {\em proper} if for every compact
$C\subset X$ the set $\{g\in G\mid g(C)\cap C\ne\emptyset\}\subset G$ is compact.
We recall that the orbit space $X/G$ of such action is always completely regular (Proposition 1.28~\cite{Pa}).
\begin{theorem}\label{criterion}
Let $\{U_0',\dots,U_n'\}$ be an open cover by $G$-invariant sets of a locally compact metric space
$Z$ with a proper action of a topological group $G$ on it. Then  for any $m=n,n+1,\dots,\infty$ there is an  open 
$(n+1)$-cover of $Z$ by $G$-invariant sets $\{U_k\}_{k=0}^{m}$ such that $U_k=U_k'$ for
$k\le n$ and $U_k=\cup_{i=0}^nV_i$ is a disjoint union with
$V_i\subset U_i$ for $k>n$.

In particular, if $\{U_0',\dots,U_n'\}$ is $\Delta$-deformable for some $\Delta\subset Z$, then the
cover $\{U_k\}_{k=0}^{m}$ is $\Delta$-deformable. 

Moreover, if $\{U_0',\dots,U_n'\}$ is equivariantly $\Delta$-deformable for some subgroup $H\subset G$, then the
cover $\{U_k\}_{k=0}^{m}$ is equivariantly $\Delta$-deformable. 
\end{theorem}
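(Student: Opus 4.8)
The plan is to reduce everything to a single combinatorial statement: that the $G$-invariant open cover $\{U_0',\dots,U_n'\}$ can be enlarged to a family of $G$-invariant open sets $\{U_k\}_{k\ge0}$ with $U_k=U_k'$ for $k\le n$, with each $U_k$ ($k>n$) of the form $U_k=\bigsqcup_{i=0}^{n}V^{(k)}_i$, $V^{(k)}_i\subset U_i'$ open and $G$-invariant, and such that every truncation $\{U_k\}_{k=0}^{m}$ (and the full family) is an $(n+1)$-cover. Granting this, the two deformability clauses are essentially automatic. If $\{U_0',\dots,U_n'\}$ is $\Delta$-deformable, fix homotopies $h^i_t\colon U_i'\to Z$ with $h^i_0$ the inclusion and $h^i_1(U_i')\subset\Delta$; for $k\le n$ keep $h^k_t$, and for $k>n$ note that $V^{(k)}_0,\dots,V^{(k)}_n$ decompose $U_k$ into relatively clopen pieces, so the maps $h^i_t|_{V^{(k)}_i}$ glue to a continuous deformation of $U_k$ into $\Delta$. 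If moreover the $h^i_t$ are $H$-equivariant for a subgroup $H\subset G$, then each $V^{(k)}_i$, being $G$-invariant, is $H$-invariant, so the glued deformation is again $H$-equivariant. Thus only the combinatorial claim needs proof.

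For the construction I would first observe that the orbit space $Z/G$ of a proper action on a locally compact metric space is paracompact, hence normal, so the finite $G$-invariant cover $\{U_i'\}$ admits a subordinate $G$-invariant partition of unity $\{\phi_i\}_{i=0}^{n}$ (pull one back from $Z/G$). I then fix vectors $\vec\epsilon^{(k)}=(\epsilon^{(k)}_0,\dots,\epsilon^{(k)}_n)$ for $k>n$ with $|\epsilon^{(k)}_i|<\tfrac1{2(n+1)}$, chosen inductively in general position (the point addressed in the last paragraph), set $U_k=U_k'$ for $k\le n$, and for $k>n$ put
\[
D^{(k)}_i=\bigcap_{j\ne i}\bigl\{\,z\in Z:\ \phi_i(z)+\epsilon^{(k)}_i>\phi_j(z)+\epsilon^{(k)}_j\,\bigr\},\qquad U_k=\bigcup_{i=0}^{n}D^{(k)}_i .
\]
Each $D^{(k)}_i$ is open and $G$-invariant (the $\phi_i$ are $G$-invariant), the $D^{(k)}_i$ are pairwise disjoint (two of the defining strict inequalities contradict one another), and $D^{(k)}_i\subset U_i'$: if $z\in D^{(k)}_i$ and $\phi_{j^*}(z)=\max_l\phi_l(z)\ (\ge\tfrac1{n+1})$ then $\phi_i(z)>\phi_{j^*}(z)+\epsilon^{(k)}_{j^*}-\epsilon^{(k)}_i>\tfrac1{n+1}-\tfrac1{n+1}=0$, so $z\in\operatorname{supp}\phi_i\subset U_i'$. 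Hence $U_k=\bigsqcup_i V^{(k)}_i$ with $V^{(k)}_i=D^{(k)}_i$ has the required shape.

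To verify the $(n+1)$-cover property, fix $z$ and let $S=\{i:z\in U_i'\}$, $s=|S|$; note $\phi_i(z)=0$ for $i\notin S$, and $\max_{i\in S}\phi_i(z)\ge\tfrac1s\ge\tfrac1{n+1}$. If $s=1$, say $S=\{i\}$, then $\phi_i(z)=1$ makes $i$ the strict maximizer of $\phi_\cdot(z)+\epsilon^{(k)}_\cdot$ for every $k$, so $z\in D^{(k)}_i\subset U_k$ for all $k>n$: $z$ lies in every new set. If $s\ge2$, the maximum of $\phi_\cdot(z)+\epsilon^{(k)}_\cdot$ is attained inside $S$, since for $j\notin S$ one has $\phi_j(z)+\epsilon^{(k)}_j=\epsilon^{(k)}_j<\tfrac1{2(n+1)}<\max_{i\in S}(\phi_i(z)+\epsilon^{(k)}_i)$; hence $z\notin U_k$ precisely when this maximum is attained at two indices of $S$, i.e.\ precisely when $\psi(z):=(\phi_i(z))_{i\in S}$ lies on the corner locus $W^{(k)}_S$ of the piecewise-linear function $\psi\mapsto\max_{i\in S}(\psi_i+\epsilon^{(k)}_i)$, a codimension-one polyhedral subset of the open $(s-1)$-simplex $\Delta^{\circ}_S=\{\psi:\psi_i>0,\ \sum_i\psi_i=1\}$. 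If the $\vec\epsilon^{(k)}$ are chosen so that, for every $S$, the loci $W^{(k)}_S$ are in general position, then no $s$ of them have a common point, so $z$ misses at most $s-1$ of the new sets; together with the $n+1-s$ original sets it misses, $z$ lies outside at most $n$ members of the family, which by \propref{n-cover} is exactly the $(n+1)$-cover condition — and it clearly survives truncation.

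The one genuinely technical point is the general-position bookkeeping invoked above: one must arrange, and then preserve while choosing the (possibly infinitely many) $\vec\epsilon^{(k)}$ one at a time, that for every $S$ any $s$ of the codimension-one loci $W^{(k)}_S\subset\Delta^{\circ}_S$ meet in the empty set — each step amounting to choosing $\vec\epsilon^{(k)}$ avoiding finitely many lower-dimensional conditions, an open dense requirement, which is how both the finite-$m$ and the $m=\infty$ cases are obtained at once. This is the combinatorial heart of the argument, in the spirit of Ostrand's technique. A separate, minor point that I would handle by reference rather than by proof is the paracompactness of the orbit space $Z/G$, which is what supplies the $G$-invariant partition of unity.
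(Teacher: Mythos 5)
Your proof is correct in outline, but it takes a genuinely different route from the paper's. The paper constructs the family $\{U_k\}$ inductively in $m$: given $\{U_0,\dots,U_{m-1}\}$, it identifies the closed $G$-invariant ``minimum-order'' set $Y=\{z:\Ord_z\scr U=m-n\}$, shows that each $Y\cap U_i$ is closed in $Z$, decomposes $Y$ into disjoint closed $G$-invariant sets $F_i\subset U_i'$, uses metrizability of $Z/G$ (this is where properness enters, via Palais and Urysohn metrization) to thicken the $F_i$ to disjoint $G$-invariant open $V_i\subset U_i'$, and sets $U_m=\bigcup V_i$; the $(n+1)$-cover property is then immediate from the choice of $Y$. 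You instead construct all the new sets at once from a single $G$-invariant partition of unity $\{\phi_i\}$ on $Z$ pulled back from $Z/G$ (properness enters in the same way, to get paracompactness of $Z/G$), with the $k$-th new set carved out by a perturbed ``strict argmax'' of the $\phi_i$; the $(n+1)$-cover property is then a general-position statement about the PL corner loci $W^{(k)}_S$ in the face simplices of the nerve. This is very much in the Kolmogorov--Ostrand tradition, and the deformability and $H$-equivariance clauses are handled the same way in both arguments (by gluing the given deformations over the clopen pieces $V_i$ of $U_k$). The trade-off is that the paper's inductive step makes the covering property essentially tautological at the cost of a slightly fussy closedness argument for $Y\cap U_i$, whereas your one-shot construction gives explicit formulas for the new sets but offloads all the difficulty onto the general-position bookkeeping, which you flag but do not carry out; that bookkeeping is genuinely more delicate than you suggest (one must ensure not merely that $s$-fold intersections of the $W^{(k)}_S$ are empty but also inductively that all lower-order intersections are transverse, and one must keep track of the fixed slopes $\psi_i-\psi_j$, so that the avoidance conditions really are finitely many codimension-$\ge 1$ conditions at each stage). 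With that caveat, both proofs are sound and essentially equivalent in power, including for $m=\infty$.
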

\begin{proof}
We construct the family $\{U_i\}_{i=0}^{m}$ by induction on $m$. For
$m=n$ we take $U_i=U_i'$.

Let $\scr U=\{ U_0,\dots, U_{m-1}\}$ be the corresponding family for
$m> n$. By Proposition~\ref{n-cover}, $\Ord_z\scr U\ge m-n$. Consider the set
$Y=\{y\in Z\mid \Ord_y\scr U= m-n\}$. Clearly, it is a closed $G$-invariant subset
of $Z$. If $Y=\emptyset$, then by Proposition~\ref{n-cover} $\ \scr U$
is an $n$-cover and we can add $U_m=U_0$ to obtain a desired
$(n+1)$-cover. Assume that $Y\ne\emptyset$. We show that for every
$i\le n$ the set $Y\cap U_i$ is closed in $Z$. Let $x$ be a limit
point of $Y\cap U_i$ that does not belong to $U_i$. Let
$U_{i_1},\dots,U_{i_{m-n}}$ be the sets of the cover $\scr U$ that
contain $x\in Y$. The limit point condition implies that
$$(U_{i_1}\cap\dots\cap U_{i_{m-n}})\cap (Y\cap U_i)\ne\emptyset.$$
Then $\Ord_y\scr U=m-n+1$ for all $y\in Y\cap U_i\cap
U_{i_0}\cap\dots\cap U_{i_{m-n}}$. Contradiction.

We define recursively $F_0=Y\cap U_0$ and $$F_{i+1}=Y\cap
U_{i+1}\setminus (\bigcup_{k=0}^iU_k).$$ 
Since the sets $U_0,\dots U_n$ cover $Z$, this process stops at $i=n$.
Note that $\{F_i\}_{i=0}^n$ is
a disjoint finite family of closed $G$-invariant subsets with $\cup_{i=0}^nF_i=Y$.
Let $q:Z\to Z/G$ be the projection to the orbit space. Since the action is proper, by the combination of the Urysohn Metrization theorem
and Palais result mentioned  before this theorem the orbit space $Z/G$ is metrizable.
Note that the sets $q(F_i)$ are disjoint. 
Since $q$ is a quotient map and $F_i$ and $U_i$ are $G$-invariant, the sets $q(F_i)$  and   $q(U_i)$ are closed and open respectively. 
By taking disjoint open neighborhoods
of $q(F_i)$  lying in $q(U_i)$ we
fix open $G$-invariant disjoint neighborhoods $V_i$ of $F_i$ with $V_i\subset
U_i$. 
We define $U_{m}=\cup_{i=0}^nV_i$. In view of
Proposition~\ref{n-cover}, $U_0,\dots, U_{m-1},U_{m}$ is an
$(n+1)$-cover.

Clearly, the deformations of $U_i$  to $\Delta$, $i\le n$, define a deformation of $U_{m}$
to $\Delta$. If the deformations of $U_i$, $i\le n$, to $\Delta$ are equivariant for some
subgroup $ H\subset G$, then the deformation of $U_m$ is equivariant.
\end{proof}

The following is well known:
\begin{prop}\label{nerve}
Let $X$ be a metric space, $A$ a subset of $X$ and
$\mathcal V' = \{V'_i\}_{i\in J}$ a cover of $A$ by sets open in $A$. Then
$\mathcal V'$ can be extended to a cover $\mathcal V = \{V_i\}_{i\in J}$ of
$A$ by sets open in $X$ with the same nerve and such that $V_i\cap A=V_i'$ for all $i\in J$.
\end{prop}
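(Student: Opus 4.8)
The plan is to thicken each $V_i'$ to an open subset of $X$ using the metric, tightly enough that no new finite intersections are created. For a nonempty $S\subseteq X$ write $d(x,S)=\inf_{s\in S}d(x,s)$, and set $d(x,\emptyset)=+\infty$; then for each $i\in J$ define
$$V_i=\{x\in X\mid d(x,V_i')<d(x,A\setminus V_i')\}.$$
First I would dispatch the three direct requirements. Each of $x\mapsto d(x,V_i')$ and $x\mapsto d(x,A\setminus V_i')$ is $1$-Lipschitz, or identically $+\infty$ when the set in question is empty; hence $V_i$ is open in $X$ (in the degenerate cases this gives $V_i=\emptyset$ when $V_i'=\emptyset$ and $V_i=X$ when $V_i'=A$). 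For $a\in A$: if $a\in V_i'$ then $d(a,V_i')=0$, and since $V_i'$ is open in $A$ there is $\epsilon>0$ with $B(a,\epsilon)\cap A\subseteq V_i'$, so $d(a,A\setminus V_i')\ge\epsilon>0$ and $a\in V_i$; if $a\in A\setminus V_i'$ then $d(a,A\setminus V_i')=0\le d(a,V_i')$, so $a\notin V_i$. Thus $V_i\cap A=V_i'$, and in particular $\mathcal V=\{V_i\}_{i\in J}$ is a cover of $A$ by sets open in $X$ because $\mathcal V'$ covers $A$.

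The only substantial point is that $\mathcal V$ and $\mathcal V'$ have the same nerve, i.e.\ $\bigcap_{i\in S}V_i\ne\emptyset\iff\bigcap_{i\in S}V_i'\ne\emptyset$ for every finite $S\subseteq J$. One implication is immediate from $V_i'\subseteq V_i$. For the converse, assume $\bigcap_{i\in S}V_i'=\emptyset$ and, toward a contradiction, pick $x\in\bigcap_{i\in S}V_i$. Let $r=\min_{i\in S}d(x,A\setminus V_i')$, which is attained at some $i_0\in S$ because $S$ is finite. Since $x\in V_{i_0}$ we have $d(x,V_{i_0}')<r$, so there is a point $a\in V_{i_0}'$ with $d(x,a)<r$. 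If $a\notin V_j'$ for some $j\in S$, then $a\in A\setminus V_j'$ forces $d(x,A\setminus V_j')\le d(x,a)<r$, contradicting the minimality of $r$. Hence $a\in V_j'$ for every $j\in S$, i.e.\ $a\in\bigcap_{i\in S}V_i'\ne\emptyset$ — a contradiction. This yields the reverse implication and completes the proof.

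I expect essentially all the content to be hidden in the choice of the defining inequality: the symmetric condition $d(x,V_i')<d(x,A\setminus V_i')$ is exactly what makes the finite-intersection bookkeeping above go through, since any candidate witness $x$ to a spurious intersection produces a nearby point $a$ of some $V_{i_0}'$ that is then forced to lie in every $V_j'$, $j\in S$. Everything else is routine: continuity of distance functions, and the standard fact that in a metric space openness of $V_i'$ in $A$ is detected by strict positivity of the distance to $A\setminus V_i'$ at points of $V_i'$.
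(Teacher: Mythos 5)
Your proof is correct, and the construction is genuinely different from the one the paper invokes. The paper defines $V_i$ via a union of balls,
\[
V_i = \bigcup_{a \in V'_i} B\!\left(a,\; \tfrac{1}{2}\, d(a, A \setminus V'_i)\right),
\]
citing Srinivasan's Proposition~3.1, and then one checks the nerve condition by choosing, for a point $x$ lying in all the $V_i$ ($i\in S$) with witnesses $a_i$, the index minimizing $d(a_{i_0}, A\setminus V'_{i_0})$ and showing that $a_{i_0}$ must lie in every $V'_j$. Your construction
\[
V_i = \{x \in X \mid d(x,V'_i) < d(x, A\setminus V'_i)\}
\]
is not the same set (e.g.\ in $X=\R$, $A=[0,1]$, $V'_1=[0,\tfrac12)$, your $V_1$ contains all of $(-\infty,0)$ while the paper's does not), but it serves the same purpose. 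Your nerve argument is a bit tidier: you minimize $d(x,A\setminus V'_i)$ over $i\in S$ directly, produce a single nearby point $a\in V'_{i_0}$, and the minimality immediately forces $a$ into every $V'_j$ — one distance comparison, no bookkeeping of per-index ball radii. You also correctly handle the degenerate cases $V'_i=\emptyset$ and $V'_i=A$ via the $d(\cdot,\emptyset)=+\infty$ convention, which the ball formula handles implicitly (the union is empty, or the radii are $+\infty$). Both proofs rest on the same underlying idea — thicken $V'_i$ into $X$ so that any point of the enlarged set is controlled by its distance to $A\setminus V'_i$ — so the approaches are cousins, but yours is a distinct and arguably more streamlined realization of that idea.
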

\begin{proof} The required extension $\mathcal V = \{V_i\}$ can be defined by the formula $$V_i = \bigcup_{a \in V'_i}B(a, \frac{d(a, A - V'_i)}{2}),\ \  i\in J$$
(see Proposition 3.1~\cite{Sr}).
\end{proof}

\section{Strongly equivariant topological complexity}

Let $G$ act on $Y$, we define the {\em strongly equivariant topological complexity} $TC_G^*(Y)$ to be the minimal 
integer $k$ such that there
is an open cover of $Y\times Y$ by $(G\times G)$-invariant sets $U_0,\dots, U_k$ such that for each $i$ there is
a $G$-equivariant map $\phi_i:U_i\to Y^I$ for the diagonal action on $U_i$ such that $\phi(y_0,y_1)(0)=y_0$ and
$\phi(y_0,y_1)(1)=y_1$. 

Note that in the definition of the equivariant topological complexity $TC_G(Y)$  Colman and Grant~\cite{CG} 
ask the sets $U_i$ to be $G$-invariant. It follows from the definitions that $TC_G(Y)\le TC^*_G(Y)$.
Another version of an equivariant topological complexity 
called the symmetric topological complexity $STC_G(Y)$ was defined by Lubawski and Marzantowicz~\cite{LM}. 
They require the existence of $(G\times G)$-equivariant deformations of $U_i$ to the $(G\times G)$-saturation of the diagonal.
We note that for free actions on simply connected spaces, 
$TC_G^*(Y)\le STC_G(Y)$ .

Let a group $G$ act on $F$ and let $p:EG\to BG$ be the projection to the orbit space of the universal
$G$-space. Then the universal $F$-bundle $p_F:F\times_GEG\to BG$ is obtained by taking the orbit space
$F\times_GEG$ of the diagonal action on $F\times EG$. We say that an $F$-bundle $f:X\to Y$ has $G$ as the structure group
if there is a map $g:Y\to BG$ such that $f$ is the pull-back $g^*(p_F)$ of the universal $F$-bundle.
In that case we call $X$ the twisted product over $Y$ with the fiber $F$ and the structure group $G$.

\begin{thm}\label{2}
Suppose that $p:X\to B$ is a $F$-bundle between locally compact metric ANR-spaces with the structure group $G$ acting properly on $F$. 
Then $$TC(X)\le TC(B)+TC_{G}^*(F).$$
\end{thm}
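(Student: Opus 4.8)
The plan is to build, with $n:=TC(B)$ and $m:=TC^*_G(F)$, a $\Delta(X)$-deformable open cover of $X\times X$ of cardinality $n+m+1$; by the characterization of $TC$ in terms of $\Delta(X)$-deformable covers recalled above, this yields $TC(X)\le n+m$. The bridge between the two factors will be the subspace $D:=(p\times p)^{-1}(\Delta(B))\subseteq X\times X$, which contains $\Delta(X)$. I will cover $X\times X$ by $n+1$ open sets deformable into $D$, cover $D$ by $m+1$ open sets deformable into $\Delta(X)$, and then amalgamate these into a single cover of size $n+m+1$.

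For the first cover, start from a $\Delta(B)$-deformable open cover $\{W_0,\dots,W_n\}$ of $B\times B$ (which exists since $TC(B)=n$) and set $\widehat W_i:=(p\times p)^{-1}(W_i)$. Since $p$ is a fibre bundle over the paracompact space $B$ it is a Hurewicz fibration, hence so is $p\times p$; lifting through $p\times p$ the deformation of $W_i$ into $\Delta(B)$, starting from the identity of $\widehat W_i$, produces a homotopy $h^i$ in $X\times X$ that deforms $\widehat W_i$ into $(p\times p)^{-1}(\Delta(B))=D$. Thus $\{\widehat W_i\}_{i=0}^n$ is an open cover of $X\times X$ by sets deformable into $D$.

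For the second cover I first recognise the fibrewise structure of $D$. Writing the $F$-bundle as $X=F\times_G\widetilde B$ for the principal $G$-bundle $\widetilde B\to B$ that classifies it (the pull-back of $EG\to BG$), one observes that over a point of $\Delta(B)$ the two copies of the fibre of $p$ coincide, so a single local trivialisation of $p$ trivialises both factors simultaneously; hence $D$ is the twisted product over $B$ with fibre $F\times F$ and structure group the diagonal copy of $G$ in $G\times G$, and under this identification $\Delta(X)$ is the sub-twisted product with fibre $\Delta(F)$. Now choose a $(G\times G)$-invariant open cover $\{A_0,\dots,A_m\}$ of $F\times F$ together with $G$-equivariant (diagonal action) maps $\varphi_j:A_j\to F^I$, $\varphi_j(f_0,f_1)(0)=f_0$, $\varphi_j(f_0,f_1)(1)=f_1$, which exist since $TC^*_G(F)=m$. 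Each $A_j$ is in particular invariant under the diagonal $G$, and $(f_0,f_1)\mapsto(\varphi_j(f_0,f_1)(t),f_1)$ is a diagonal-$G$-equivariant deformation of $A_j$ into $\Delta(F)$; its equivariance is exactly what lets it glue to a well-defined (independent of the chosen trivialisation) fibrewise deformation, over all of $B$, of the sub-twisted product $P_j:=B\tilde{\times}A_j\subseteq D$ into $\Delta(X)$, taking place inside $D$. Thus $\{P_j\}_{j=0}^m$ covers $D$ by sets deformable into $\Delta(X)$; properness of the $G$-action on $F$ is what guarantees that $D$ and the relevant orbit-space quotients are again locally compact metric ANR-spaces, so the covering machinery of the preliminaries applies to them.

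It remains to amalgamate. Intersecting the two covers naively — $Q_{ij}:=\{z\in\widehat W_i:\ h^i_1(z)\in P_j\}$ — already gives a $\Delta(X)$-deformable open cover of $X\times X$ (push $\widehat W_i$ into $D$ landing in $P_j$, then push $P_j$ into $\Delta(X)$), but of size $(n+1)(m+1)$. To cut the cardinality down to $n+m+1$ I will invoke Ostrand's technique in the form of Theorem~\ref{criterion}, applied with trivial group actions so that $\Delta$-deformability is preserved, together with Proposition~\ref{nerve} to transport a cover of $D$ to sets open in $X\times X$: enlarge $\{\widehat W_i\}$ to an $(n+1)$-cover of $X\times X$ and $\{P_j\}$ to an $(m+1)$-cover of $D$, both with $n+m+1$ members, and then run a counting argument based on Proposition~\ref{n-cover}. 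I expect this amalgamation to be the main obstacle: the ``horizontal'' cover coming from $B$ and the ``vertical'' cover coming from $D$ do not live over a common base, and the two families of deformations do not commute, so a crude index-by-index intersection need not cover $X\times X$; the work is in organising the enlargements so that the $k$-cover bookkeeping closes up. Once a $\Delta(X)$-deformable open cover of size $n+m+1$ is produced, the characterization of $TC$ gives $TC(X)\le n+m=TC(B)+TC^*_G(F)$.
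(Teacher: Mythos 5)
Your setup is right — the bridge $D=(p\times p)^{-1}(\Delta(B))$, the pullback cover $\{\widehat W_i\}$ deformed fibrewise into $D$, and the recognition of $D$ as a twisted product over $B$ with fibre $F\times F$ and diagonal $G$ as structure group — and you correctly identify where the argument is stuck. But the gap you flag is a genuine one, and the paper's proof resolves it with an idea you don't supply: rather than keeping the ``vertical'' sets $P_j$ as open subsets of $D$ and trying to detect membership through the destination $h^i_1(z)$ of a homotopy, the paper defines the vertical open sets \emph{globally} on $X\times X$. Concretely, one writes the pullback diagram $X\times X\to (F\times F)\times_{(G\times G)}E(G\times G)$ over $B\times B\to B(G\times G)$, takes the Ostrand $(m+1)$-cover $\{V_k\}_{k=0}^{n+m}$ of $F\times F$ by $(G\times G)$-invariant, $G$-equivariantly $\Delta(F)$-deformable sets, and sets $O_k'=(g')^{-1}(V_k\times_{(G\times G)}E(G\times G))\subset X\times X$. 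These are open in all of $X\times X$ and form an $(m+1)$-cover of $X\times X$. Then $W_k:=(p\times p)^{-1}(U_k)\cap O_k'$ is a bona fide intersection of two open families on the same space, and the counting closes: a point $(x,y)$ has $(p(x),p(y))$ in at least $m+1$ of the $U_{i_s}$ (since $\{U_k\}$ is an $(n+1)$-cover of size $n+m+1$), the corresponding $\{V_{i_s}\}$ cover $F\times F$, hence the corresponding $\{O'_{i_s}\}$ cover $X\times X$, hence $(x,y)\in W_{i_s}$ for some $s$.

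Your proposed $Q_{ij}=\{z\in\widehat W_i: h^i_1(z)\in P_j\}$ cannot be reorganized into a cover of size $n+m+1$ by the Ostrand counting, precisely because the ``landing point'' $h^i_1(z)\in D$ depends on the index $i$: the $(m+1)$-cover property of $\{P_j\}$ applies to a fixed point of $D$, and there is no single point to apply it to. Extending $\{P_j\}$ to open sets of $X\times X$ via Proposition~\ref{nerve}, as you suggest, does not fix this, since the extension is only an $(m+1)$-cover near $D$ and the counting must hold on all of $X\times X$. Making the vertical sets $(G\times G)$-invariant in the fibre and pulling them back globally does both jobs at once: it yields an honest $(m+1)$-cover of $X\times X$, and — because $O_k'\to B\times B$ is then itself a subbundle — the lifted deformation of $(p\times p)^{-1}(U_k)\cap O_k'$ into $D$ automatically stays inside $O_k'$, landing in $O_k'\cap D$ exactly where the fibrewise $G$-equivariant deformation to $\Delta(X)$ is defined. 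That compatibility between the horizontal and vertical deformations is the second thing your proposal leaves open, and the global definition of $O_k'$ is what makes both problems disappear simultaneously.
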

\begin{proof}
Let $TC(B)=n$ and $TC_{G}^*(F)=m$.
By Theorem~\ref{criterion} applied for the trivial group there is  an $(n+1)$-cover $U_0,\dots, U_{m+n}$ 
of $B\times B$ by $\Delta(B)$-deformable open sets.
By Theorem~\ref{criterion} applied for the $(G\times G)$-action on $F\times F$ with the diagonal
subgroup $G=\Delta(G)\subset G\times G$ there is an $(m+1)$-cover  $V_0,\dots, V_{m+n}$ of $F\times F$ 
by $(G\times G)$-invariant $G$-equivariantly deformable to 
$\Delta(F)$ open sets. 

The bundle $p\times p:X\times X\to B\times B$ is the pull back in the diagram
$$
\begin{CD}
X\times X @>g'>> (F\times F)\times_{(G\times G)}E(G\times G)\\
@Vp\times pVV @VqVV\\
B\times B @>g>> B(G\times G)\\
\end{CD}
$$
Note that the family
$$
O_k=V_k\times_{(G\times G)}E(G\times G), \ \  k=0,\dots, m+n.
$$
is  an $(m+1)$-cover of the space $(F\times F)\times_{(G\times G)}E(G\times G)$. 

Let $O'_k=(g')^{-1}(O_k)$. We define 
$$W_k=(p\times p)^{-1}(U_k)\cap O_k',\ \ k=0,\dots,m+n.$$

We claim that the sets $W_k$, $k=0,\dots n+m$, cover  the space $X\times X$.

Let $(x,y)\in X\times X$. By Proposition~\ref{n-cover} the point $(p(x),p(y))\in B\times B$ is covered at least by $m+1$ elements from $\{U_i\}$. 
Let $(p(x),p(y))\in U_{i_0}\cap\dots\cap U_{i_m}$. By the assumption, the family $V_{i_0},\dots,V_{i_m}$ covers $F\times F$. Therefore, the family $O_{i_0},\dots,O_{i_m}$ covers $F\times F\times_{(G\times G)}E(G\times G)$ and
hence the family $$(g')^{-1}(O_{i_0}),\dots,(g')^{-1}(O_{i_m})$$ covers $X\times X$.
Thus, $(x,y)\in (g')^{-1}(O_{i_s})$ for some $s$.
Then $(x,y)\in W_{i_s}$.

Next we show that each set $W_k$ is deformable to $\Delta(X)$. The homotopy lifting property of the fiber bundle $f:O'_k\to B\times B$
where $f=(p\times p)|_{O'_k}$ and the fact that the inclusion $j:U_k\to B\times B$ is deformable to the diagonal  $\Delta(B)$ imply that
the set $W_k$ can be deformed in $O_k'\subset X\times X$ to the preimage $f^{-1}(\Delta(B))$ of the diagonal. 

We note that the bundle $q$ over the diagonal $\Delta(BG)\subset\Delta(BG)\times\Delta(BG)$ is isomorphic to the twisted product
bundle $q_0:(F\times F)\times_GEG\to BG$ for the diagonal action of $G$ on $F\times F$. Let $\phi^k_t:V_k\to F$ be a $G$-equivariant homotopy
with $\phi^k_0(x_1,x_2)=x_1$ and  $\phi^k_1(x_1,x_2)=x_2$. We define a deformation $\Phi^k_t:V_k\to F\times F$ by the formula
$\Phi^k_t(x_1, x_2)=\phi^k_t(x_1,x_2)\times x_2$. Note that it is $G$-equivariant for the diagonal action, $\Phi_0^k=id$, and $\Phi_1^k(V_k)\subset\Delta(F)$. The deformation $\Phi_t^k$ defines a fiberwise deformation of $V_k\times_GEG$ 
in $(F\times F)\times_GEG$ to the space $\Delta(F)\times_G EG$. Thus, it defines a fiberwise deformation of $O_k$ over the diagonal $\Delta(BG)$
to $$\Delta(F\times_GEG)\subset (F\times_GEG)\times(F\times_GEG)=(F\times F)\times_{(G\times G)}E(G\times G).$$
In the pull-back diagram this defines a fiberwise deformation to $\Delta(X)$ of the set $O_k'$ over $\Delta(B)$ which is $f^{-1}(\Delta(B))$.

Thus, the concatenation of the above two deformations define a continuous deformation of $W_k$
to the diagonal $\Delta(X)$.
\end{proof}
\begin{prop}\label{dim}
Suppose that a discrete group $\pi$ acts freely and properly on a simply connected locally compact ANR space $Y$. Then $TC^*_{\pi}(Y)\le \dim Y$. 
\end{prop}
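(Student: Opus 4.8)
The plan is to reduce the statement to an equivariant covering/deformation problem and then attack it with Ostrand's technique, as the preliminaries suggest.

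\emph{Step 1 (reduction).} First I would record that $TC^*_\pi(Y)\le k$ holds as soon as $Y\times Y$ admits an open cover by $k+1$ sets $U_0,\dots,U_k$ which are $(\pi\times\pi)$-invariant and $\Delta(\pi)$-equivariantly deformable to $\Delta(Y)$. Indeed, given a $\Delta(\pi)$-equivariant homotopy $h^i_t=(h^{i,1}_t,h^{i,2}_t)\colon U_i\to Y\times Y$ with $h^i_0=\mathrm{incl}$ and $h^i_1(U_i)\subset\Delta(Y)$, concatenating $h^{i,1}_t$ run forward with $h^{i,2}_t$ run backward produces a $\Delta(\pi)$-equivariant map $\phi_i\colon U_i\to Y^I$ with $\phi_i(y_0,y_1)(0)=y_0$, $\phi_i(y_0,y_1)(1)=y_1$ — precisely a strongly equivariant motion planner. (This is the mechanism already used in the proof of \theoref{2}.) So it suffices to produce such a cover with $k+1=\dim Y+1$ members.

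\emph{Step 2 (why it should work).} The inclusion $j\colon\Delta(Y)\hookrightarrow Y\times Y$ is $1$-connected: $Y$ is simply connected, hence so are $\Delta(Y)\simeq Y$ and $Y\times Y$, so $j$ is an isomorphism on $\pi_0$ and $\pi_1$; moreover $\dim(Y\times Y)=2\dim Y$, the free proper $\Delta(\pi)$-action not affecting dimension. Non-equivariantly this already yields $TC(Y)\le\dim Y$, since a $1$-connected map into a complex of dimension $2n$ has category $\le n$ (equivalently $\cat(Y\times Y)\le n$). The content of the proposition is the equivariant, ANR upgrade of this fact, and that is exactly where \propref{n-cover}, \theoref{criterion} and \propref{nerve} enter.

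\emph{Step 3 (construction).} Put $d=\dim Y$ and pass to the orbit space $\bar Z=(Y\times Y)/\Delta(\pi)$, a locally compact metric ANR. The first projection is $\Delta(\pi)$-equivariant, making $\bar Z$ a fiber bundle $\rho\colon\bar Z\to B:=Y/\pi$ with fiber $Y$ and section $\sigma=\Delta(Y)/\Delta(\pi)\cong B$, while the residual free proper action of $1\times\pi$ gives a covering $r\colon\bar Z\to B\times B$; here $\dim B=d$, and a subset of $\bar Z$ is the image of a $(\pi\times\pi)$-invariant set iff it has the form $r^{-1}(\text{open})$. I would fix an open cover of $B$ by contractible charts over which $\rho$ is $\pi$-equivariantly trivialized, and over each chart cover the fiber $Y$ by finitely many $\pi$-compatible sets which — using triviality of the bundle over the chart, contractibility of the chart, and the $1$-connectivity of the fiberwise pair $(Y,\mathrm{pt})$ (so small fiber pieces can be pushed toward $\sigma$) — are $(\pi\times\pi)$-invariant and $\Delta(\pi)$-equivariantly deformable into $\sigma(B)$ fiberwise over $\rho$; \propref{nerve} is used to carry the local deformations off the charts. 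Then I would apply Ostrand's theorem, in the form packaged into \theoref{criterion} and \propref{n-cover}, to refine and recolor all of these local families into a single $(d{+}1)$-cover of $\bar Z$ — a family every $d{+}1$ members of which already cover — by $(\pi\times\pi)$-invariant sets each deformable to $\Delta(Y)$. Selecting any $d+1$ of its members then gives the cover required in Step 1 and the bound $TC^*_\pi(Y)\le d$.

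\emph{Main obstacle.} The crux is the Kolmogorov--Ostrand superposition bookkeeping of Step 3: the $\dim B$-controlled cover of the base and the fiber covers must be combined so that the number of colors collapses to $\dim Y+1$ rather than to the product of the two counts, and this recoloring has to be performed through the covering $r$ so that every set remains $(\pi\times\pi)$-invariant and its deformation to the diagonal survives the disjointification step of \theoref{criterion}. Checking that the final family is genuinely a $(d{+}1)$-cover — equivalently, that every point of $B\times B$ lies outside at most $d$ of the members — is the point I expect to demand the most care.
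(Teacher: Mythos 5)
Your Steps 1 and 2 correctly identify both the reduction (it suffices to produce a cover of $Y\times Y$ by $d+1$ sets that are $(\pi\times\pi)$-invariant and $\Delta(\pi)$-equivariantly deformable to $\Delta(Y)$) and the right non-equivariant heuristic (the pair $(Y\times Y,\Delta(Y))$ is $1$-connected and $Y\times Y$ has dimension $2d$). But Step 3 is where the proof actually has to happen, and there is a genuine gap: you have diagnosed the obstacle without supplying the idea that closes it, and the tool you reach for cannot close it. Theorem~\ref{criterion} takes a fixed open cover $\{U_0',\dots,U_n'\}$ by deformable sets and extends it to a larger family that is an $(n+1)$-cover with the same deformability; it never reduces the number of deformability classes, so it cannot collapse a ``charts over $B$ times fiber pieces'' family down to $d+1$ colors. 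In the paper Theorem~\ref{criterion} is used only in the proof of Theorem~\ref{2}, not in Proposition~\ref{dim} at all, and no bundle structure over $B=Y/\pi$ ever enters.

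The color-collapsing step in the paper is Engelking's decomposition theorem: since $\dim(X\times X)\le 2d$ for $X=Y/\pi$, one can write $X\times X=S_0\cup\dots\cup S_d$ with each $\dim S_i\le 1$. Proposition~\ref{nerve} then thickens each $S_i$ to an open set $W_i\subset X\times X$ covered by an arbitrarily fine family $\mathcal U_i$ of order $\le 2$, so the nerve $K_i$ is a graph; the ANR property supplies a projection $\phi_i\colon W_i\to K_i$ and a map $\xi_i\colon K_i\to X\times X$ with $\xi_i\circ\phi_i\simeq\mathrm{incl}$. These data are then lifted along the covering $p\colon Y\times_\pi Y\to X\times X$ (using evenly covered $\mathcal U_i$), giving $\phi_i'\colon W_i'\to K_i'$ and $\xi_i'\colon K_i'\to Y\times_\pi Y$. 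The mechanism that deforms $W_i'$ into $\Delta(Y)/\pi$ is the one you flagged in Step 2 but did not exploit: the inclusion $\Delta(Y)/\pi\hookrightarrow Y\times_\pi Y$ is a $\pi_1$-isomorphism, hence $\pi_1(Y\times_\pi Y,\Delta(Y)/\pi)=0$, and because $K_i'$ is $1$-dimensional one can push $\xi_i'$ into $\Delta(Y)/\pi$ vertex by vertex and then edge by edge. Finally $V_i=q^{-1}(W_i')=\bar q^{-1}(W_i)$ gives the required $(\pi\times\pi)$-invariant, $\Delta(\pi)$-equivariantly deformable cover of $Y\times Y$ by $d+1$ sets. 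So your proposal is missing both of the load-bearing ideas: the Kolmogorov--Ostrand/Engelking decomposition into $d+1$ one-dimensional pieces, and the $\pi_1$-argument that lets $1$-dimensional nerves be pushed into the diagonal.
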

\begin{proof} Let $X= Y/\pi$ and let $\dim Y=k$. Since $\dim X=k$, $\dim(X\times X)\le 2k$. It follows from the classical 
dimension theory that there are 1-dimensional sets $S_0,\dots, S_k$ that cover $X\times X$~\cite{En}.
In view of Proposition~\ref{nerve} and 1-dimensionality of $S_i$ there is an arbitrary small cover $\mathcal U_i$ of $S_i$ of order $\le 2$
by open in $X\times X$ sets with compact closure.  Let $W_i=\cup_{U\in\mathcal U_i}U$. Since $X\times X$ 
is an ANR, we may assume that for each $i$ there is a projection to the nerve $\phi_i:W_i\to K_i=N(\mathcal U_i)$ 
 and a map $\xi_i:K_i\to  X\times X$ such that the composition $\xi_i\circ\phi_i:W_i\to  X\times X$ is homotopic 
to the inclusion $W_i\subset  X\times X$.

Let $\bar q:Y\times Y\to X\times X$ be the projection onto the orbit space of the  action of $\pi\times\pi$ and 
let $q:Y\times Y\to Y\times_{\pi}Y$ be projection onto the orbit space of the diagonal subgroup action. Then there is a connecting projection
$p:Y\times_{\pi}Y\to X\times X$, $\bar q=p\circ q$. Since the actions are free,
$p$ is a covering map. We may assume that $\mathcal U_0\cup\dots\cup\mathcal U_k$ is a cover of $X\times X$ by even for $p$ sets. Thus, for $U\in\mathcal U_i$, $$p^{-1}(U)=\coprod_{\gamma\in J}\tilde U_{\gamma}$$ and
the restriction of $p|_{\tilde U_{\gamma}}:\tilde U_{\gamma}\to U$ is a homeomorphism for all $\gamma\in J$. 
Moreover, we may assume that there is an even cover  $\mathcal V$ of $X\times X$ and a homotopy $h^i_t$ between the inclusion $W_i\to X\times X$ and $\xi_i\circ\phi_i$ such that for each $i$ and $U\in\mathcal U_i$ there is $V\in\mathcal V$ such that the image of $U\times[0,1]$ under that homotopy is contained in $V$.

Then $p$ induces a simplicial covering map $p_i':K_i'\to K_i$ of the nerve $K'_i$ of the cover
$\mathcal U_i'=\{\tilde U_{\gamma}\mid U\in\mathcal U_i, \gamma\in J\}$ onto the nerve $K_i$. 
In the above notations $p'_i$ takes a vertex corresponding to $\tilde U_{\gamma}$ to the vertex defined by $U$.

Let $W_i'=p^{-1}(W_i)$. Then we claim that there are  maps $$\phi_i':W_i'\to K_i' \ \ \text{and}\  \ \xi_i':K_i'\to Y\times_{\pi}Y$$ that cover $\phi_i$ and $\xi_i$ with $\xi_i'\circ\phi_i':W_i'\to  Y\times_{\pi}Y$
homotopic to the inclusion $W_i'\subset  Y\times_{\pi}Y$. Indeed, the projection to the nerve $\phi_i:W_i\to K_i$ is defined by means of  a partition
of unity $\{f_j\}$ subordinated to the cover $\mathcal U_i=\{U^i_j\}$ of $W_i$ as 
$\phi_i(x)=(f_j(x))\in\ell_2(K_i^{(0)})$. Here we realize $K_i$ in the standard simplex in the Hilbert space spanned by the vertices $K_i^{(0)}$.
Let $f_{j,\gamma}$ be the extension to $W_i'$ of the composition $f_j\circ p|_{\tilde U_{\gamma}}:\tilde U_{\gamma}\to[0,1]$ by  0. Then $\{f_{j,\gamma}\}$ is the partition of unity
on $W_i'$ subordinated to $\mathcal U'$ that defines a projection $\phi_i'$ and the corresponding diagram is the pull-back diagram.

The homotopy $g^i_t=h^i_t\circ  p:W_i'\to X\times X$ admits a lift $\bar g^i_t:W_i'\to Y\times_{\pi}Y$ with $g^i_0:W_i'\to Y\times_{\pi}Y$ the inclusion.
Since for every $x'\in K_i'$ the map $\bar g^i_1$ coincides with $(p|_{V_{\alpha}})^{-1}\circ h^i_1\circ p$ on $\phi_i^{-1}(x')$ where 
$p|_{V_{\alpha}}:V_{\alpha}:\to V$ is a homeomorphism for some $V\in\mathcal V$ and $\alpha$, the set $$\bar g^i_1((\phi_i')^{-1}(x'))=
(p|_{V_{\alpha}})^{-1}(\phi_i(p'_i(x')))$$ consists of one point. Hence
 $\bar g^i_1$ factors through $\phi_i'$.

Note that $X=\Delta(Y)/\pi$ is naturally embedded in $Y\times_{\pi}Y$. We show that the sets $W_i'=p^{-1}(W_i)$ are deformable to $X$.
The construction of a deformation of $\xi_i'$ to a map to $X$ goes as follows. First we do it on all vertices 
$v\in (K_i')^{(0)}$ by fixing a path $p_v:[0,1]\to  Y\times_{\pi}Y$ with $p_v(0)=\xi_i'(v)$ and $p_v(1)\in X\subset Y\times_{\pi}Y$.
Note that the inclusion $X\to  Y\times_{\pi}Y$ induces an isomorphism of the fundamental groups. Then the homotopy exact 
sequence of  pair implies that
$\pi_1( Y\times_{\pi}Y,X)=0$. Therefore, for every edge $[u,v]\subset K_i'$ the product of the paths   $\bar p_u\ast\xi_i'|_{[u,v]}\ast p_v$, 
is path homotopic to a path in $X$. Here $\bar p_v$ denotes the inverse path for $p_v$. 
This defines a deformation of $\xi_i'|_{[u,v]}$ to $X$ that 
agrees with $p_u$ and $p_v$. All such deformations of edges  define a deformation of $\xi_i'$ to $X$. 
This deformation together with a homotopy of the inclusion $W_i'\subset  Y\times_{\pi}Y$ to $\xi_i'\circ\phi_i'$ 
defines a deformation of $W_i'$ to $X$.

Let $V_i=q^{-1}(W_i')$. Then each $V_i$ is $(\pi\times\pi)$-invariant since $V_i=\bar q^{-1}(W_i)$. A deformation of $W_i'$ to $\Delta(Y)/\pi$
defines a $G$-equivariant deformation of $V_i$ to $\Delta(Y)$
Thus, an open cover $V_i=q^{-1}(W'_i)$, $i=0,\dots, k$, of $Y\times Y$ satisfies all conditions from the definition of $TC^*_{\pi}(Y)$.
\end{proof}

\begin{thm}
For a CW complex $X$ with the fundamental group $\pi$  there is the inequality
$$
TC(X)\le TC(\pi)+\dim X.
$$
\end{thm}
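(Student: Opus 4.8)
The plan is to realize $X$, up to homotopy, as a twisted product over a $K(\pi,1)$ with fiber the universal cover $\wt X$ and structure group $\pi$, and then to combine Theorem~\ref{2} with Proposition~\ref{dim}.

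First I would dispose of the trivial cases: if $\dim X=\infty$ or $TC(\pi)=\infty$ the right-hand side is infinite and there is nothing to prove, so assume $n:=\dim X<\infty$ and $TC(\pi)<\infty$. Since $cd(\pi)\le TC(\pi)<\infty$, the group $\pi$ has a finite-dimensional Eilenberg--MacLane complex. Replacing that complex and replacing $X$ by homotopy equivalent locally finite complexes of the same dimensions, I obtain a locally finite model $B$ for $K(\pi,1)$ whose universal cover $E\pi$ is a contractible locally compact ANR carrying a free, proper, simplicial $\pi$-action with $E\pi/\pi=B$, together with a locally finite $n$-dimensional model for $X$ whose universal cover $\wt X$ is a simply connected locally compact ANR with $\dim\wt X=n$ on which $\pi$ acts freely and properly by deck transformations.

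Next I would form the Borel construction $X':=\wt X\times_\pi E\pi$, the orbit space of the diagonal $\pi$-action on $\wt X\times E\pi$. As a quotient of a locally compact ANR by a free proper discrete action, $X'$ is a locally compact metric ANR, and the map $p:X'\to B$ induced by the projection $\wt X\times E\pi\to E\pi$ is the fiber bundle associated to the principal $\pi$-bundle $E\pi\to B$ via the $\pi$-action on $\wt X$; hence $p$ is an $\wt X$-bundle between locally compact metric ANR-spaces with structure group $\pi$ acting properly on $\wt X$, which is precisely the hypothesis of Theorem~\ref{2}. The other projection $X'\to\wt X/\pi=X$ is an $E\pi$-bundle with contractible fiber, so $X'\simeq X$ and therefore $TC(X')=TC(X)$ by homotopy invariance of $TC$. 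Theorem~\ref{2} then gives
$$TC(X)=TC(X')\le TC(B)+TC^*_\pi(\wt X)=TC(\pi)+TC^*_\pi(\wt X),$$
and since $\pi$ acts freely and properly on the simply connected locally compact ANR $\wt X$, Proposition~\ref{dim} yields $TC^*_\pi(\wt X)\le\dim\wt X=n$. Combining the two inequalities gives $TC(X)\le TC(\pi)+\dim X$.

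The step I expect to be the main obstacle is the point-set reduction in the second paragraph: replacing $X$ and the chosen $K(\pi,1)$ by locally finite models of the same dimension, and then verifying that the resulting Borel construction is a genuine locally trivial fiber bundle between locally compact metric ANR-spaces whose (discrete) structure group acts properly on the fiber, so that Theorems~\ref{criterion} and~\ref{2} and Proposition~\ref{dim} literally apply. The homotopy-theoretic fact that $X$ is a twisted product over $K(\pi,1)$ with fiber $\wt X$ is classical, and, granting the nice models, the estimate is then immediate from the two cited results.
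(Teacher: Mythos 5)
Your proposal is correct and follows essentially the same route as the paper: apply Theorem~\ref{2} to the Borel construction $\widetilde X\times_\pi E\pi\to B\pi$ (noting it is homotopy equivalent to $X$ because $E\pi$ is contractible), then finish with Proposition~\ref{dim}. The extra point-set care you flag about choosing locally finite ANR models of the correct dimension is something the paper passes over silently, but it is not a different argument.
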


\begin{proof}
We apply Theorem~\ref{2} to the bundle $p:\widetilde X\times_{\pi}E\pi\to B\pi$ with the structure group $\pi$ and the fiber $\widetilde X$, the universal cover of $X$.
Note that since $E\pi$ is contractible, the map $\widetilde X\times_{\pi}E\pi\to X$
induced by the projection $pr_1: \widetilde X\times E\pi\to\widetilde X$ to the first factor is a homotopy equivalence.
We apply Proposition~\ref{dim} to complete the proof.
\end{proof}

\end{document}